\newtheorem{lemma}{Lemma}
\newtheorem{assumption}{Assumption}
\newtheorem{proposition}{Proposition}
\newcommand{\E}{\mathbb{E}}
\newcommand{\matilde}[1]{{\color{magenta} Maty: #1}}
\title{\LARGE \bf
On the Synthesis of Bellman Inequalities for \\ Data-Driven Optimal Control
}
\author{Andrea Martinelli, Matilde Gargiani and John Lygeros
\thanks{Research supported by the European Research Council
	under the H2020 Advanced Grant no. 787845 (OCAL).}
\thanks{The authors are with the Automatic Control Laboratory, Swiss Federal Institute of Technology (ETH) Zurich, 8092 Zurich, Switzerland. Emails: {\tt\small \{andremar,gmatilde,lygeros\}@ethz.ch}}%
}
\begin{document}

\maketitle
\thispagestyle{empty}
\pagestyle{empty}


\begin{abstract}

In the context of the linear programming (LP) approach to data-driven control, one assumes that the dynamical system is unknown but can be observed indirectly through data on its evolution. Both theoretical and empirical evidence suggest that a desired suboptimality gap is often only achieved with massive exploration of the state-space. In case of linear systems, we discuss how a relatively small but sufficiently rich dataset can be exploited to generate new constraints offline and without observing the corresponding transitions. Moreover, we show how to reconstruct the associated unknown stage-costs and, when the system is stochastic, we offer insights on the related problem of estimating the expected value in the Bellman operator without re-initializing the dynamics in the same state-input pairs. 

\end{abstract}


\section{INTRODUCTION}


The linear programming (LP) approach to optimal control problems was initially developed by A.S. Manne in the 1960s \cite{ManneLP}, following the well-known studies conducted by R. Bellman in the 1950s \cite{BellmanDP}. The idea is to exploit the monotonicity and contractivity properties of the Bellman operator \cite{BertsekasVol2} to build LPs whose solution is the optimal value function. An evident advantage of the LP formulation is that there exist efficient and fast algorithms to tackle such programs \cite{BoydConvexOptimization}. On the other hand, similarly to the classic dynamic programming approach introduced by Bellman, the LP approach suffers from poor scalability properties referred to as \textit{curse of dimensionality} \cite{BertsekasNDP}. The sources of intractability for systems with continuous state and action spaces can be identified as, among others, an optimization variable in an infinite dimensional space and an infinite number of constraints. For this reason the infinite dimensional LPs are usually approximated by tractable finite dimensional ones \cite{EsfahaniFromInftoFinitePrograms,deFariasLPapproach,SchweitzerPolynomApproxinMDP,PaulADP}.
In recent years, the LP approach has experienced an increasing interest, especially in combination with model-free control techniques \cite{SutterCDC2017,GoranADP,AndreaRelaxedOperator, AlexandrosIFAC20}. In such a setting, one assumes the dynamical system to be unknown but observable via state-space exploration, and builds one Bellman inequality (or constraint) of the LP for each observed transition. In this way, it is possible to both bypass the more classic system identification step and mitigate a source of intractability by solving an LP with a finite amount of constraints. A discussion on the approximation introduced by constraint sampling can be found in \cite{deFariasConstraintSampling}. Both theoretical and empirical evidence present in the previously mentioned literature suggest that a massive amount of data is generally needed to comply with a desired performance level.  As discussed in the scalability analysis performed in \cite{AndreaRelaxedOperator}, this aspect becomes even more evident for large-scale systems. Moreover, another relevant problem in the LP approach is to provide an estimate for the expected value in the constraints. This is usually performed by re-initializing the dynamics in the same state-input pairs and computing a Monte Carlo estimate of the associated value function evaluated at the next state \cite{SutterCDC2017}. In a stochastic framework, unfortunately, it may be practically impossible to re-initialize the system at desired states.

Another thriving data-driven research direction is the one revolving around behavioural theory and Willem's \textit{fundamental lemma} \cite{WillemsPersistence}, stating that the information contained in a sufficiently long trajectory of a linear system is, under mild assumptions, enough to describe any other trajectory of the same length that can be generated by the system itself. In a model-free context, a so-called \textit{persistently exciting} exploration input is often used to generate such trajectories, obtain a data-based representation of the underlying linear system and develop control techniques such as MPC \cite{JeremyDeePC} or assess system's properties such as dissipativity \cite{RomerDissipativity} and stabilizability/controllability \cite{TesiDatadrivencontrol}. The authors in \cite{vanWaardeDataInformativity} discuss the conditions for which a dataset is informative, \textit{i.e.} when the data contain enough information to accomplish a specific control task. 

Motivated by the poor scalability often affecting the LP approach and inspired by the recent literature on data-driven control of linear systems, in the present work we discuss how to mitigate the cost of performing massive exploration. After introducing the problem general formulation in Sec. \ref{Sec:optimalcontrol}, our main contributions can be summarised as follows:
\begin{itemize}
	\item We show in Sec. \ref{Sec:Unknowndynamics} that a sufficiently rich dataset can be used to generate all the constraints involved in the LP formulation for a linear system, offline and without observing the corresponding transitions;
	\item Moreover, thanks to a bilinear algebra framework, we show in Sec. \ref{Sec:Unknownstagecost} how to reconstruct the associated stage-cost evaluations starting again from a fixed dataset;
	\item For stochastic systems, we provide in Sec. \ref{Sec:Stochastic} insights on the estimation of the expected values in the constraints of the LP, without resorting to iterative dynamics re-initialization.
\end{itemize}

\subsection*{Notation and background}

We denote with $\mathbb{M}_p$ the set of $p \times p$ real matrices and with $\tilde{\mathbb{M}}_p \subset \mathbb{M}_p$ the subset of symmetric matrices. A vector of ones of suitable dimension is denoted with $\bm{1}$. 
The vectorization of a symmetric matrix $M \in \tilde{\mathbb{M}}_p$ with entries $[M]_{ij} = m_{ij}$ is $\mbox{vec}(M) = \begin{bmatrix} m_{11} & m_{12} & \cdots & m_{pp} 
\end{bmatrix} \in \mathbb{R}^{p^2}$ and its trace is denoted with $\mbox{tr}(M)$. A symmetric bilinear form \cite{BilinearAlgebra} is a map $\beta : \mathbb{R}^{p}\times\mathbb{R}^p \rightarrow \mathbb{R}$ that is linear in its arguments taken separately, and such that $\beta(x,y) = \beta(y,x)$ for all $x,y \in \mathbb{R}^p$.
%
%
A pair $(\mathbb{R}^p,\beta)$ defines a bilinear space. We also define the quadratic form $\ell : \mathbb{R}^{p} \rightarrow \mathbb{R}$ associated to $\beta$ as $\ell(z) = \beta(z,z)$ for all $z \in \mathbb{R}^p$. The following holds:
\begin{align}
\ell(a z) & = a^{2}\ell(z) \quad \forall a\in \mathbb{R}, \; \forall z\in \mathbb{R}^{p} \label{bilinear properties 1} \\
	\beta(x,y) & = \tfrac{1}{2}(\ell(x+y) - \ell(x) - \ell(y)) \quad \forall x,y \in \mathbb{R}^p. \label{bilinear properties 2}
\end{align}

Moreover, given a basis $\mathcal{B}=\{ b_1,\ldots,b_p \}$ for $\mathbb{R}^p$, we denote with $[M^{\mathcal{B}}]_{ij} = \beta(b_i,b_j)$ the matrix representation of $\beta$ in the basis $\mathcal{B}$.

\section{OPTIMAL CONTROL VIA LINEAR PROGRAMMING}\label{Sec:optimalcontrol}

Consider a discrete-time stochastic dynamical system
\begin{equation}\label{nonlinear dynamical system}
	x_{k+1} = f(x_{k},u_k,\xi_k),
\end{equation}
with (possibly infinite) state and action spaces $x_k \in \mathbb{R}^{n}$ and $u_k \in \mathbb{R}^{m}$. Here, $\xi_k \in \mathbb{R}^{n}$ denotes the realizations of independent identically distributed (i.i.d.) random variables, and $f : \mathbb{R}^n \times \mathbb{R}^m \times \mathbb{R}^n \rightarrow \mathbb{R}$ is the map encoding the dynamics.	
We consider \textit{stationary feedback policies}, given by functions $\pi : \mathbb{R}^n \rightarrow \mathbb{R}^m$; for more general classes of policies, see \cite{LasserreDTMCP}. A nonnegative cost is associated to each state-action pair through the \textit{stage cost} function $\ell : \mathbb{R}^n \times \mathbb{R}^m \rightarrow \mathbb{R}_{+}$. We introduce a \textit{discount factor} $\gamma \in (0,1)$ and consider the infinite-horizon cost associated to policy $\pi$
\begin{equation}\label{OC problem}
	v_{\pi}(x) = \E_{\xi}\left[  \sum_{k=0}^{\infty}\gamma^{k}\ell(x_k,\pi(x_k)) \; \bigg| \; x_0 = x \right].
\end{equation}
The objective of the optimal control problem is to find an optimal policy $\pi^{\ast}$ such that $v_{\pi^{*}}(x) = \inf_{\pi} v_{\pi}(x) = v^{*}(x)$, where $v^*$ is known as the optimal \textit{value function}. Let us define the vector space of all real-valued measurable functions that have a finite $r$-weighted sup-norm \cite[\S 2.1]{BertsekasAbstractDP} as
\begin{equation}\label{finite sup-norm}
 \mathbb{V} = \{ v : \mathbb{R}^n \rightarrow \mathbb{R} \;\; \vert \;\; || v ||_{\infty,r} <\infty \}.
\end{equation}
Throughout the paper, we work under \cite[Assump. 4.2.1 and 4.2.2]{LasserreDTMCP} to ensure that $v^{*} \in \mathbb{V}$, $\pi^{*}$ is measurable and the infimum of $v_{\pi}$ is attained. 
The optimal value function can be expressed as the solution of the following infinite-dimensional linear program \cite{LasserreDTMCP,deFariasLPapproach}
%
%
\begin{equation}\label{LPvaluefunction}
	\begin{aligned}
		\sup_{v\in \mathbb{V}} & \int_{\mathbb{R}^n} v(x)c(dx) \\
		\mbox{s.t.} \;\; & v(x) \le \ell(x,u) + \gamma \E_{\xi}v(f(x,u,\xi)) \quad \forall (x,u),
	\end{aligned}
\end{equation}
where $c$ is a finite measure that assigns positive mass to all open subsets of $\mathbb{R}^n$. The above formulation is not solvable in general due to several sources of intractability, see \textit{e.g.} \cite{PaulADP} and \cite{WangIteratedBellInequalities}. If one is nonetheless able to obtain $v^*$, they can in principle compute the corresponding policy by 
\begin{equation}\label{greedy policy}
	\pi^*(x) = \arg \min_{u} \{ \ell(x,u) + \gamma \E_{\xi}v^*(f(x,u,\xi)) \}.
\end{equation}
A special case of the infinite-horizon optimal control problem arises when the dynamics is linear 
\begin{equation}\label{linearmap}
	f(x,u,\xi) = Ax + Bu + \xi,
\end{equation}
with $A \in \mathbb{M}_n, B \in \mathbb{R}^{n \times m}$, and the cost function is quadratic
\begin{equation}\label{quadraticcost}
	\ell(x,u) = \begin{bmatrix} x \\ u \end{bmatrix}^{\intercal} L \begin{bmatrix} x \\ u \end{bmatrix} = \begin{bmatrix} x \\ u \end{bmatrix}^{\intercal} \begin{bmatrix}
		L_{xx} & L_{xu} \\ L_{xu}^{\intercal} & L_{uu}
	\end{bmatrix} \begin{bmatrix} x \\ u \end{bmatrix}. 
\end{equation}
For such linear-quadratic (LQ) problems we impose the following assumption. 
\begin{assumption}\label{assumption stabilizability}
	The pair $(A,B)$ is stabilizable, $\xi$ is i.i.d. with zero mean and covariance matrix $\Sigma$. Moreover, $L\succeq 0$ and $L_{uu} \succ 0$.
\end{assumption}
By denoting $P\in \tilde{\mathbb{M}}_n$ and $e\in\mathbb{R}^n$, let us define
\begin{equation}\label{function space v}
	\mathbb{V}_{q} = \{ v : \mathbb{R}^{n} \rightarrow \mathbb{R} \;\; \vert \;\; v(x) = x^{\intercal} P x + e \} \subset \mathbb{V}.                                                            
\end{equation} 
The solution to the LP \eqref{LPvaluefunction} under LQ assumptions \eqref{linearmap}-\eqref{quadraticcost} is then $v^*(x) = x^\intercal P^*x+e^* \in \mathbb{V}_{q}$, where $P^*$ is the solution to the well-known associated algebraic Riccati equation (ARE) \cite{DavisStochasticControl} and $e^*= \tfrac{\gamma}{1-\gamma}\mbox{tr}(P^*\Sigma)$. By imposing $c$ to be a probability distribution with zero mean and identity covariance matrix and restricting $v\in\mathbb{V}_q$, an equivalent formulation for \eqref{LPvaluefunction} that directly involves $P$ and $e$ is \cite{PaulADP}
\begin{equation}\label{LP1}
	\begin{aligned}
		\max_{P, \,e} & \quad \mbox{tr}(P) + e \\
		\mbox{s.t.} & \quad x^{\intercal}Px \le \ell(x,u) \\
		& \quad + \gamma \E_{\xi} (Ax+Bu+\xi)^{\intercal}P(Ax+Bu+\xi),
	\end{aligned}
\end{equation}
for all $(x,u)$. 

In model-based control, typically one assumes that $f$ is known and directly looks for solutions to, \textit{e.g.}, the ARE or finite-dimensional semidefinite programs using convex optimisation tools. In model-free control on the other hand, one assumes that the model is unknown but can be observed indirectly through data $(x^i,u^i,x^{i+})$ of state-input pairs $(x^i,u^i) \in \mathbb{R}^n \times \mathbb{R}^{m}$ and the next state $x^{i+}=f(x^i,u^i,\xi^i)$. The solution to \eqref{OC problem} can be estimated, \textit{e.g.}, by means of reinforcement learning methods \cite{SuttonRLanIntroduction}. Moreover, one can also obtain the optimal policy by reformulating \eqref{LPvaluefunction}-\eqref{greedy policy} in terms of the so-called $Q$-function \cite{WatkinsQLearning}. To keep the discussion simple, however, we do not explore this direction here. In the context of the LP approach, the infinite constraints needed to construct the feasible region in \eqref{LPvaluefunction} or \eqref{LP1} are often replaced with a finite subset, each one associated with a data tuple $(x^i,u^i,x^{i+})$, as argued in \cite{GoranADP,deFariasConstraintSampling,AndreaRelaxedOperator,AlexandrosIFAC20}.

\section{FEASIBLE REGION SYNTHESIS FROM DATA}

\subsection{Unknown dynamics}\label{Sec:Unknowndynamics}

The information contained in a sufficiently long trajectory of a linear system is, under mild assumptions, enough to describe any other trajectory of the same length that can be generated by the system \cite{WillemsPersistence}. In this section we discuss the consequences of this idea and adapt the theoretical implications to the context of the data-driven LP approach. Throughout the section we work under LQ assumptions \eqref{linearmap}-\eqref{quadraticcost}, Assumption \ref{assumption stabilizability}, and we consider deterministic dynamics, \textit{i.e.} $\xi=0$ for all time steps and $\Sigma = 0$. First, we provide a description in matrix form of the Bellman inequalities.

\begin{proposition}\label{prop}
	The constraint set in \eqref{LP1} is equivalent to 
	\begin{equation}\label{constraint set}
		\mbox{vec}(H(x,u))^{\intercal}\mbox{vec}(P) \le \ell(x,u) \quad \forall (x,u),
	\end{equation}
where $H : \mathbb{R}^{n}\times\mathbb{R}^{m} \rightarrow \tilde{\mathbb{M}}_n$ is
\begin{equation}\label{H(x,u)}
	H(x,u) = xx^\intercal - \gamma (Ax+Bu)(Ax+Bu)^\intercal.
\end{equation}
\end{proposition}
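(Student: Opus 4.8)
The plan is to begin from the single scalar Bellman inequality in \eqref{LP1}, specialize it to the deterministic setting fixed in this section, and then rewrite both quadratic forms in $P$ as linear functionals of $\mbox{vec}(P)$ through the trace--vectorization correspondence. Since here $\xi=0$ and $\Sigma=0$, the expectation in \eqref{LP1} vanishes and the constraint reads
\begin{equation*}
	x^{\intercal}Px \le \ell(x,u) + \gamma (Ax+Bu)^{\intercal}P(Ax+Bu) \quad \forall (x,u),
\end{equation*}
so that, after collecting the $P$-dependent terms on the left, it is equivalent to $x^{\intercal}Px - \gamma (Ax+Bu)^{\intercal}P(Ax+Bu) \le \ell(x,u)$ for all $(x,u)$.

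The key observation is that any quadratic form $z^{\intercal}Pz$ with $P$ symmetric is a trace, $z^{\intercal}Pz = \mbox{tr}(Pzz^{\intercal})$, and that for two symmetric matrices the trace coincides with the Euclidean inner product of their vectorizations, $\mbox{tr}(PM) = \mbox{vec}(P)^{\intercal}\mbox{vec}(M)$. Both identities follow by an entrywise expansion; the second holds precisely because the vectorization adopted in the Notation paragraph stores all $p^2$ entries, so that the off-diagonal pair $(i,j)$ and $(j,i)$ is summed once each and the total matches $\sum_{ij}P_{ij}M_{ij}$ with no factor-of-two correction. Applying this with $z=x$ gives $x^{\intercal}Px = \mbox{vec}(xx^{\intercal})^{\intercal}\mbox{vec}(P)$, and with $z=Ax+Bu$ gives $(Ax+Bu)^{\intercal}P(Ax+Bu) = \mbox{vec}\big((Ax+Bu)(Ax+Bu)^{\intercal}\big)^{\intercal}\mbox{vec}(P)$.

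It then remains to merge the two terms using linearity of $\mbox{vec}(\cdot)^{\intercal}\mbox{vec}(P)$ in its first argument: the left-hand side becomes $\mbox{vec}\big(xx^{\intercal} - \gamma(Ax+Bu)(Ax+Bu)^{\intercal}\big)^{\intercal}\mbox{vec}(P)$, which is exactly $\mbox{vec}(H(x,u))^{\intercal}\mbox{vec}(P)$ for $H$ as in \eqref{H(x,u)}. Since every step is an equivalence valid for each fixed $(x,u)$, quantifying over all $(x,u)$ establishes the equivalence of the two constraint sets.

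I do not expect a genuine obstacle, since the argument is a chain of elementary linear-algebraic identities. The only point that truly requires attention is verifying that the chosen vectorization makes $\mbox{tr}(PM)=\mbox{vec}(P)^{\intercal}\mbox{vec}(M)$ hold verbatim; this would fail (picking up weights of $2$ on off-diagonal entries) for a half-vectorization, but is guaranteed by the full $p^2$-dimensional convention used in the preliminaries.
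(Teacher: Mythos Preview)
Your proof is correct and follows essentially the same approach as the paper: both linearize the quadratic forms $x^{\intercal}Px$ and $(Ax+Bu)^{\intercal}P(Ax+Bu)$ in the entries of $P$ and collect the coefficients into the single matrix $H(x,u)$. The only cosmetic difference is that the paper writes out the entrywise double sum $\sum_{i,j} h_{ij}(x,u)\,p_{ij}$ explicitly, whereas you package the same computation via the trace identity $z^{\intercal}Pz=\mbox{tr}(Pzz^{\intercal})=\mbox{vec}(zz^{\intercal})^{\intercal}\mbox{vec}(P)$; your remark about the full $p^2$-vectorization (as opposed to a half-vectorization) is precisely what makes this identity match the paper's convention without stray factors of two.
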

\begin{proof}
 We can express the constraint set in \eqref{LP1} as
	\begin{equation}\label{inequalityLQR}
		\sum_{i=1}^{n}\sum_{j=i}^{n}h_{ij}(x,u)p_{ij} \le \ell(x,u) \quad \forall (x,u),
	\end{equation}
	where 
	\begin{equation*}\label{hij LQR}
		h_{ij}(x,u) = x_ix_j - \gamma (Ax+Bu)_{i}(Ax+Bu)_{j}.
	\end{equation*}
Then by imposing $[H(x,u)]_{ij} = h_{ij}(x,u)$ we obtain \eqref{H(x,u)} and, finally, we can re-arrange the left-hand side of \eqref{inequalityLQR} in vector form and obtain \eqref{constraint set}.
\end{proof}
We say that $(X,U,X^{+})$ is a dataset of length $T$ when $X = \begin{bmatrix} x^{1} & \cdots & x^{T} \end{bmatrix}$, $U = \begin{bmatrix} u^{1} & \cdots & u^{T} \end{bmatrix}$ and $X^{+} = AX+BU$. We also introduce the following assumption.

\begin{assumption}\label{assumption rank}
	$\mbox{rank}\begin{bmatrix} X \\ U \end{bmatrix} = n+m$.
\end{assumption}

The following lemma shows how all the infinite constraints in \eqref{LP1} can potentially be reconstructed offline directly from the dataset without explicitly determining the matrices $A$ and $B$ and without observing the corresponding system's transitions.
\begin{lemma}\label{lemma}
	Consider a dataset $(X,U,X^{+})$ of length $T$ satisfying Assumption \ref{assumption rank}. Then, for each $(x,u) \in \mathbb{R}^{n}\times\mathbb{R}^{m}$ there exists an $\alpha \in \mathbb{R}^{T}$ 
and such that
	\begin{equation}\label{fundamental relation}
		H(x,u) = (X\alpha)(X\alpha)^\intercal - \gamma (X^{+}\alpha)(X^{+}\alpha)^\intercal.
	\end{equation}
\end{lemma}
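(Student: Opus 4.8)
The plan is to reduce the matrix identity \eqref{fundamental relation} to the solvability of a single linear system, exploiting the defining relation $X^{+} = AX+BU$ of the dataset. The crucial observation is that if one can find $\alpha \in \mathbb{R}^{T}$ satisfying \emph{simultaneously} $X\alpha = x$ and $U\alpha = u$, then the linearity of the data map propagates automatically to the successor data: $X^{+}\alpha = (AX+BU)\alpha = A(X\alpha) + B(U\alpha) = Ax+Bu$. This is precisely the point at which $A$ and $B$ drop out of the computation — we never need to identify them, only to know that $X^{+}$ was generated through them.

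First I would stack the two requirements into the single linear system $\begin{bmatrix} X \\ U \end{bmatrix}\alpha = \begin{bmatrix} x \\ u \end{bmatrix}$. The existence of a solution $\alpha$ for \emph{every} right-hand side $\begin{bmatrix} x \\ u \end{bmatrix} \in \mathbb{R}^{n+m}$ is equivalent to the coefficient matrix $\begin{bmatrix} X \\ U \end{bmatrix} \in \mathbb{R}^{(n+m)\times T}$ having full row rank. This is exactly the content of Assumption \ref{assumption rank}, which guarantees $\mbox{rank}\begin{bmatrix} X \\ U \end{bmatrix} = n+m$ (and in particular forces $T \ge n+m$). Hence such an $\alpha$ always exists; it need not be unique when $T > n+m$, but any choice serves the purpose.

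It then remains to substitute back into the right-hand side of \eqref{fundamental relation}. Using $X\alpha = x$ and $X^{+}\alpha = Ax+Bu$ from the previous step, I would compute $(X\alpha)(X\alpha)^{\intercal} - \gamma (X^{+}\alpha)(X^{+}\alpha)^{\intercal} = xx^{\intercal} - \gamma(Ax+Bu)(Ax+Bu)^{\intercal}$, which is exactly $H(x,u)$ by definition \eqref{H(x,u)}. This closes the argument.

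I do not expect a genuine technical obstacle: the lemma is essentially a restatement of Willems-type reasoning expressed in quadratic coordinates, and the only real insight is spotting the reduction to the linear system above rather than attempting to reconstruct $A$ and $B$ explicitly. The single point deserving care is the \emph{direction} of the rank condition — it is full \emph{row} rank of the stacked data matrix that yields surjectivity onto the state-input space $\mathbb{R}^{n+m}$, and surjectivity is what the universal quantifier ``for each $(x,u)$'' in the statement demands.
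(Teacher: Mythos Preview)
Your proposal is correct and follows essentially the same argument as the paper: invoke Assumption~\ref{assumption rank} to solve $\begin{bmatrix} X \\ U \end{bmatrix}\alpha = \begin{bmatrix} x \\ u \end{bmatrix}$, use $X^{+}\alpha = (AX+BU)\alpha = Ax+Bu$, and substitute into the definition of $H(x,u)$. The additional commentary you provide (non-uniqueness of $\alpha$, the role of full row rank as surjectivity) is accurate and simply makes explicit what the paper leaves implicit.
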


\vspace{0.1cm}

\begin{proof}
	Since $\big[X^\intercal \;\, U^\intercal\big]^\intercal$ is full row-rank by assumption, we know there exists an $\alpha$ satisfying
	\begin{equation}\label{linear comb}
		\begin{bmatrix}
			x \\ u
		\end{bmatrix} = \begin{bmatrix} X \\ U
		\end{bmatrix} \alpha.
	\end{equation}
Moreover, since
\begin{equation}
	 AX\alpha + BU\alpha = (AX+BU)\alpha = X^{+}\alpha,
\end{equation}
we have that
\begin{align*}
		H(x,u) & = xx^\intercal - \gamma (Ax+Bu)(Ax+Bu)^\intercal \\ & = (X\alpha)(X\alpha)^\intercal - \gamma (X^{+}\alpha)(X^{+}\alpha)^\intercal,
\end{align*}
that concludes the proof.
\end{proof}
Thanks to Proposition~\ref{prop} and Lemma~\ref{lemma} we know that, given a dataset $(X,U,X^+)$ satisfying Assumption \ref{assumption rank}, we can reconstruct each of the infinite constraints in \eqref{LP1} by computing $H(x,u)$ at suitable linear combinations of $X$ and $X^{+}$. Figures~\ref{fig1} and~\ref{fig2} illustrate the fundamental role of the sampled constraints for the suboptimality of the derived solution and, consequently, the utility of the proposed artificial sampling technique on the linear system 
\begin{equation}\label{system numerics}
	x^+ = \begin{bmatrix}
		1 & 0.1 \\ 0.5 & -0.5
	\end{bmatrix}x + \begin{bmatrix}
		1 \\ 0.5
	\end{bmatrix}u.
\end{equation}
In particular, only the first 10 constraints are generated via simulations of the system. The collected data are then used to synthesise new constraints according to \eqref{fundamental relation} for randomly selected values of $\alpha$. As depicted in Fig.~\ref{fig1}, the additional constraints allow for a dramatic improvement of the optimality gap. A graphical representation of the support constraints displacement is plotted in Fig~\ref{subfig1} in the variables space $(p_{11},p_{12},p_{22})$. In particular, we first solve the LP with the 10 observed constraints (blue dot), and then we solve the LP again by including 10 additional constraints generated artificially (red dot). 
Fig.~\ref{subfig2} shows the corresponding improvement in the value function. As observed experimentally, in general generating enough constraints from exploration to reach a prescribed performance level can be prohibitive in a real scenario. Our proposed approach alleviates this issue by allowing one to only sample a small subset of the state-space and then inexpensively synthesise new constraints offline. 

Artificial constraints generation can also be exploited in a policy iteration (PI) fashion \cite{BertsekasVol2} to, \textit{e.g.}, complement the approach proposed in \cite{GoranADP}. The initialization of the data-driven PI algorithm can be performed by exploring the state-space, as described in \cite{GoranADP}. Then, at any successive step $t$ of the algorithm one can emulate the PI behaviour by selecting appropriate vectors $\alpha$ that target state-input pairs associated with the current policy as follows
\begin{equation}
	\begin{bmatrix}
		x \\ K_tx
	\end{bmatrix} = \begin{bmatrix} X \\ U
	\end{bmatrix} \alpha,
\end{equation}
without need for further exploration.

\begin{figure}
	\centering
	\includegraphics[width=0.9\linewidth]{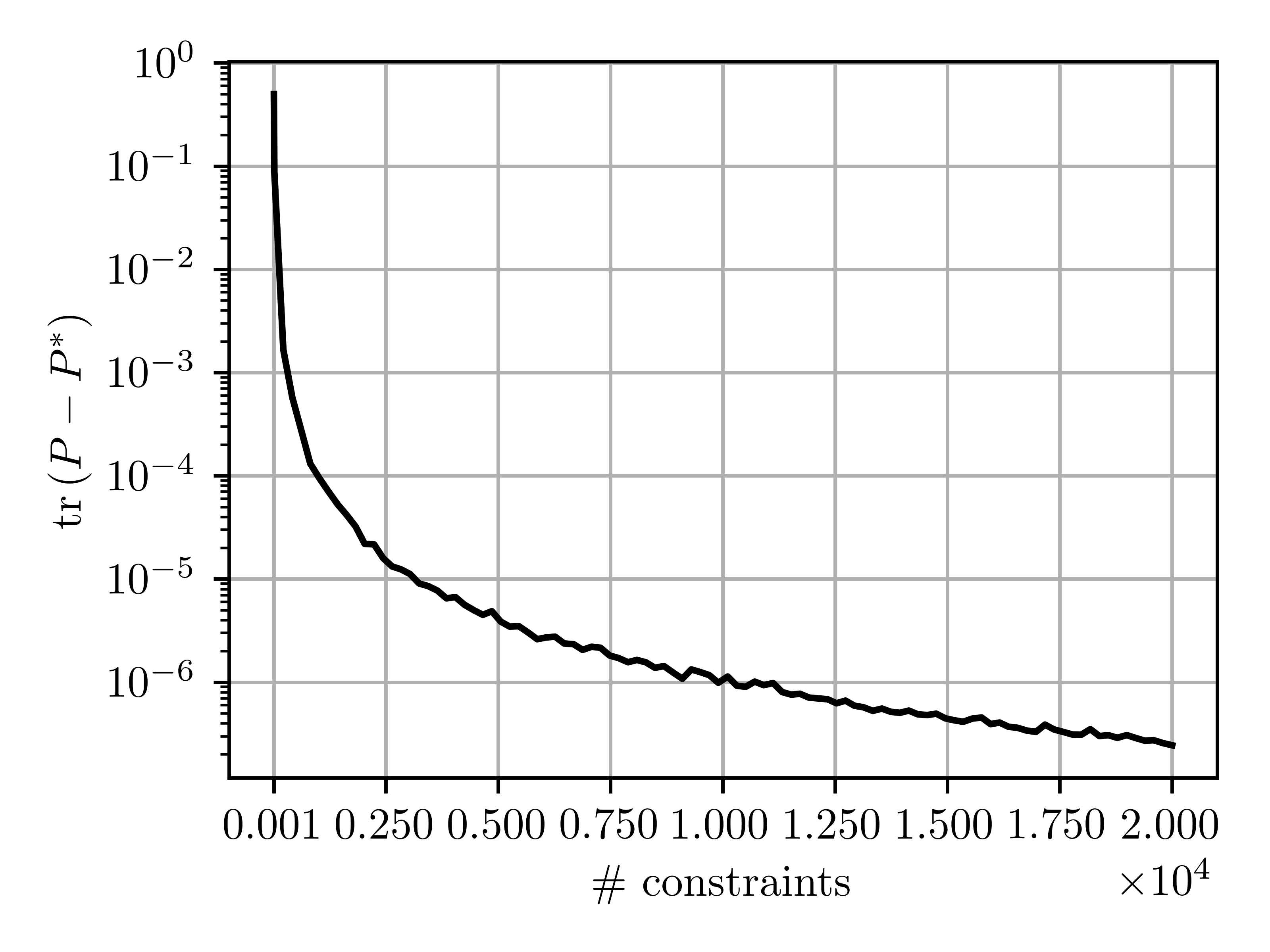}
	\caption{Median across $10^3$ independent runs of the optimality gap versus the number of constraints for system~\eqref{system numerics}.}
	\label{fig1}
\end{figure}

\subsection{Unknown stage cost}\label{Sec:Unknownstagecost}

In the context of control applications, the stage-cost is formulated by the designer: it is thus reasonable to consider $\ell(x,u)$ to be known. In cases where the stage-cost is not known, the following proposition provides a way to reconstruct $\ell(x,u)$ if we observe the cost incurred at a finite number of state-input pairs.

\begin{proposition}\label{prop:stagecost}
	Consider a dataset $(X,U,X^+)$ satisfying Assumption \ref{assumption rank}, and the square matrix $\big[\tilde{X}^\intercal \;\, \tilde{U}^\intercal \big]^\intercal \in \mathbb{M}_{n+m}$ obtained by down-selecting $n+m$ columns from $\big[X^\intercal \;\, U^\intercal\big]^\intercal$ such that Assumption~\ref{assumption rank} is still satisfied. Then, for each $(x,u) \in \mathbb{R}^{n}\times\mathbb{R}^{m}$ there exists an $\alpha \in \mathbb{R}^{n+m}$ such that
	\begin{equation}\label{stage cost decomposition}
		\ell(x,u) = \alpha^{\intercal} L_{X,U} \alpha,
	\end{equation}
where $L_{X,U} \in \tilde{\mathbb{M}}_{n+m}$ is
\begin{equation}\label{LXU}
	[L_{X,U}]_{ij} = \beta \Big(\begin{bmatrix} x^{i} \\ u^{i}
	\end{bmatrix}, \begin{bmatrix} x^{j} \\ u^{j}
	\end{bmatrix} \Big),
\end{equation}
and $\beta$ is the bilinear form associated to $\ell$.
\end{proposition}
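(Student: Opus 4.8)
The plan is to exploit the fact that $\ell$ is a quadratic form, hence associated to the symmetric bilinear form $\beta(z,w) = \tfrac12(\ell(z+w) - \ell(z) - \ell(w))$ from \eqref{bilinear properties 2}, and to recognize $L_{X,U}$ as precisely the matrix representation $M^{\mathcal B}$ of $\beta$ in the basis given by the down-selected columns. Since $\big[\tilde X^\intercal \;\, \tilde U^\intercal\big]^\intercal \in \mathbb{M}_{n+m}$ is square and full-rank by construction, its columns $b_i = \big[(x^i)^\intercal \;\, (u^i)^\intercal\big]^\intercal$ form a basis $\mathcal B = \{b_1,\ldots,b_{n+m}\}$ of $\mathbb{R}^{n+m}$. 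This is the observation that makes everything go through.

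First I would fix $(x,u) \in \mathbb{R}^n \times \mathbb{R}^m$ and invoke Assumption~\ref{assumption rank} on the down-selected matrix: since it is an invertible square matrix, there is a unique $\alpha \in \mathbb{R}^{n+m}$ with $\big[x^\intercal \; u^\intercal\big]^\intercal = \sum_{k} \alpha_k b_k$, exactly as in \eqref{linear comb} but now with $\alpha$ determined uniquely rather than merely existing. Next I would expand $\ell$ of this point using bilinearity. Writing $z = \big[x^\intercal\;u^\intercal\big]^\intercal = \sum_k \alpha_k b_k$, I would use $\ell(z) = \beta(z,z)$ together with linearity of $\beta$ in each slot to obtain
\begin{equation*}
	\ell(x,u) = \beta\Big(\sum_i \alpha_i b_i, \sum_j \alpha_j b_j\Big) = \sum_{i}\sum_{j} \alpha_i \alpha_j\, \beta(b_i,b_j).
\end{equation*}
Recognizing $\beta(b_i,b_j) = [L_{X,U}]_{ij}$ by the definition \eqref{LXU}, the double sum is exactly $\alpha^\intercal L_{X,U}\,\alpha$, which is \eqref{stage cost decomposition}. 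Symmetry of $L_{X,U}$ (so that it lies in $\tilde{\mathbb M}_{n+m}$) follows from $\beta(b_i,b_j) = \beta(b_j,b_i)$.

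The conceptual content is light, so the main thing to get right is the bookkeeping rather than any single hard step. The part deserving the most care is the interpretation of the entries $[L_{X,U}]_{ij} = \beta(b_i,b_j)$ as \emph{observable} quantities: by \eqref{bilinear properties 2} each off-diagonal entry is a fixed linear combination of three stage-cost evaluations $\ell(b_i+b_j)$, $\ell(b_i)$, $\ell(b_j)$, while the diagonal entries are the direct evaluations $\ell(b_i) = \beta(b_i,b_i)$. Thus building $L_{X,U}$ requires only the costs incurred at the $n+m$ basis points together with the $\binom{n+m}{2}$ pairwise sums $b_i+b_j$, a finite set of measurements; this is the sense in which $\ell$ is reconstructed from a finite dataset, and I would state it explicitly since it is the practically meaningful consequence of the identity.
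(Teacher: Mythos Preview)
Your proposal is correct and follows essentially the same approach as the paper: express $(x,u)$ uniquely in the basis of down-selected columns, then expand the quadratic form over this linear combination to obtain the double sum $\sum_{i,j}\alpha_i\alpha_j\beta(b_i,b_j)=\alpha^\intercal L_{X,U}\alpha$. The only cosmetic difference is that the paper carries out the expansion by recursively applying the polarization identity \eqref{bilinear properties 2} to peel off one summand at a time, whereas you invoke bilinearity of $\beta$ in both slots directly; your route is slightly cleaner but not substantively different.
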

\begin{proof}
	First note that there exists a unique $\alpha$ satisfying 
	\begin{equation}\label{linear comb quadratics}
		\begin{bmatrix}
			x \\ u
		\end{bmatrix} = \begin{bmatrix} \tilde{X} \\ \tilde{U}
		\end{bmatrix} \alpha.
	\end{equation}
Let us temporarily denote $z = \big[ x \;\, u\big]^\intercal$ and $Z = \big[\tilde{X}^\intercal \;\, \tilde{U}^\intercal \big]^\intercal$ such that $z^i$ is the $i$-th column of $Z$ and $\alpha^i$ is the $i$-th entry of $\alpha$. Moreover, we recall that, since $\ell : \mathbb{R}^{n+m} \rightarrow \mathbb{R}$ is a quadratic form, properties \eqref{bilinear properties 1}-\eqref{bilinear properties 2} hold. 
This allows us to express $\ell(x,u) = \ell(z) = \ell(Z\alpha)$ as
\small
\begin{align*}\label{bilinear form first}
	\ell(Z\alpha) & = \ell \Big(\sum_{i=1}^{n+m}\alpha^iz^i\Big) \\
	& = \ell(\alpha^1z^1) + \ell \Big( \sum_{i=2}^{n+m}\alpha^iz^i \Big) + 2\beta \Big(\alpha^{1}z^{1},\sum_{i=2}^{n+m}\alpha^iz^i\Big).
\end{align*}
\normalsize
On the other hand, it also holds
\small
\begin{equation*}
	\ell\Big(\sum_{i=2}^{n+m}\alpha^iz^i\Big) = \ell(\alpha^2z^2) + \ell\Big( \sum_{i=3}^{n+m}\alpha^iz^i \Big) + 2\beta\Big(\alpha^{2}z^{2},\sum_{i=3}^{n+m}\alpha^iz^i\Big).
\end{equation*}
\normalsize
Hence, $\ell(Z\alpha)$ can be written recursively as
\small
\begin{align}\label{recursive f}
		\ell(Z\alpha) & = \sum_{i=1}^{n+m}\ell(\alpha^iz^i) + 2\sum_{i=1}^{n+m} \beta\Big(\alpha^{i}z^{i},\sum_{j=i+1}^{n+m}a^jz^j\Big) \notag \\
		& = \sum_{i=1}^{n+m}{\alpha^i}^{2}\ell(z^i) + 2\sum_{i=1}^{n+m}\sum_{j=i+1}^{n+m}\alpha^i\alpha^j\beta(z^i,z^j) \notag \\
		& = \alpha^{\intercal} \begin{bmatrix} \ell(z^1) & \beta(z^1,z^2) & \cdots & \beta(z^1,z^k) \notag \\
			\star & \ell(z^2) & \cdots & \beta(z^2,z^k) \\
			\star&\star & \ddots & \vdots \\
			\star& \star& \star& \ell(z^k)
		\end{bmatrix} \alpha \\
	& = \alpha^{\intercal} L_{X,U} \alpha,
\end{align}
\normalsize
where the symbol $\star$ denotes symmetry. Finally, since $\ell(z^i)=\beta(z^i,z^i)$, we obtain \eqref{stage cost decomposition}-\eqref{LXU}.
\end{proof}

\begin{figure}
	\centering
	\begin{subfigure}[b]{0.45\textwidth}
		\centering
		\includegraphics[width=0.9\linewidth]{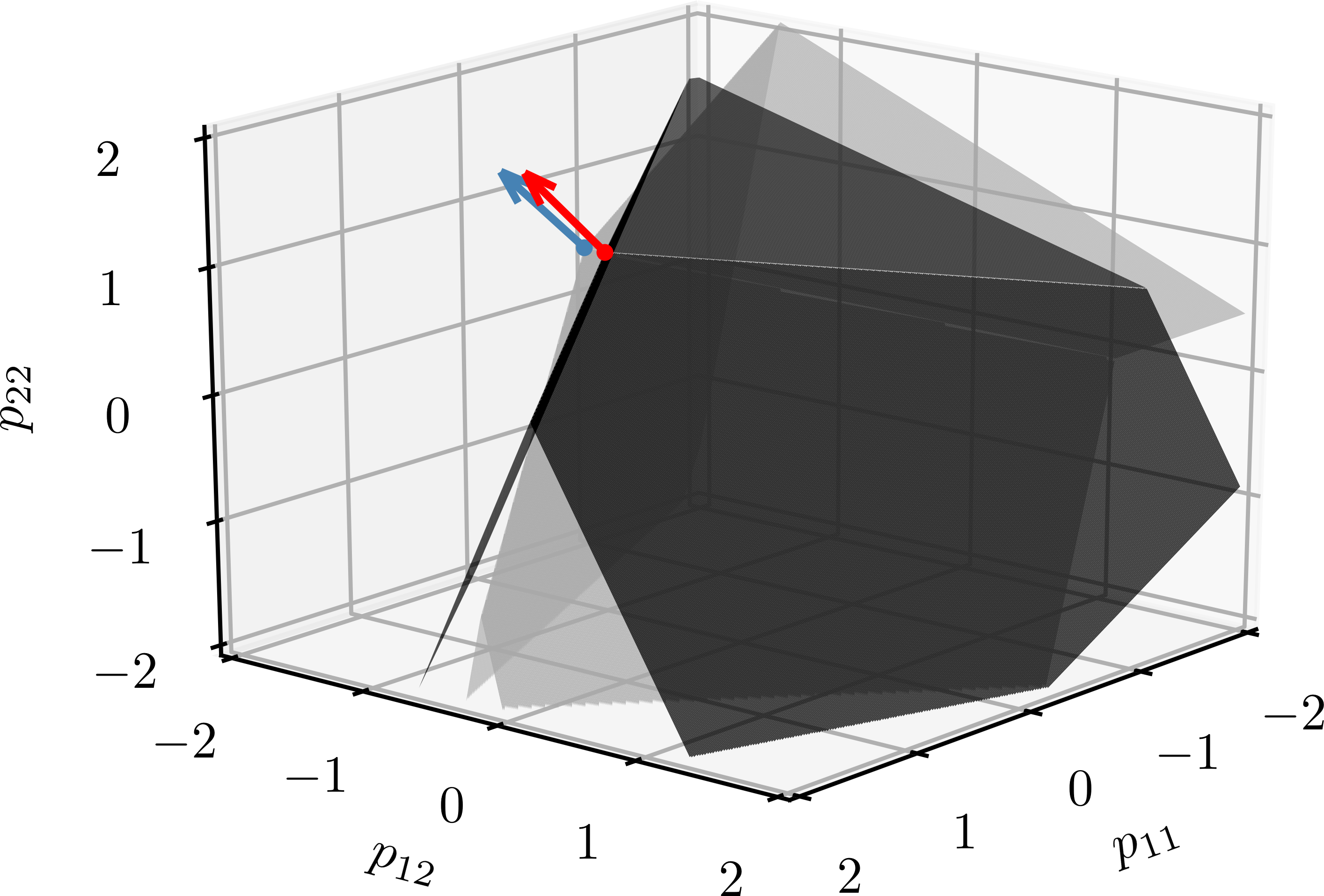}
		\caption{Relative displacement of the support constraints generated with exploration (light grey) and artificial sampling (dark grey) for the LP associated with system~\eqref{system numerics}. The colored dots and arrows represent the optimal solutions of the LPs and the gradient direction, respectively.}
		\label{subfig1}
	\end{subfigure}
	\hfill
	\begin{subfigure}[b]{0.45\textwidth}
		\centering
		\includegraphics[width=0.8\linewidth]{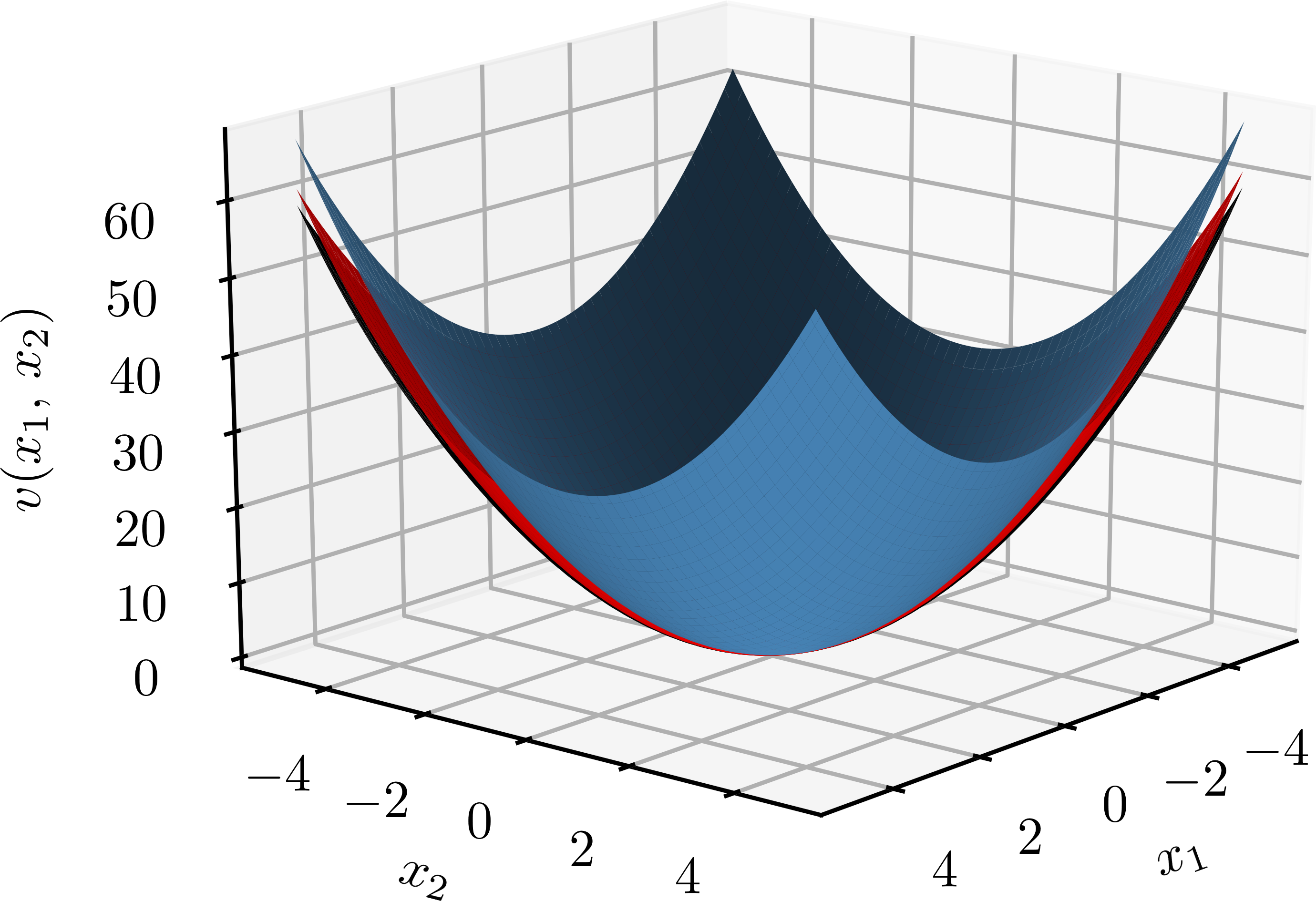}
		\caption{Color-matched graphical representation of the quadratics associated with the solutions depicted in Fig.~\ref{subfig1} and the optimal value function (in black).}
		\label{subfig2}
	\end{subfigure}
	\caption{Graphical comparison of the support constraints and solutions for the LPs associated with system~\eqref{system numerics} when different sets of constraints are considered.}
	\label{fig2}
\end{figure}

Note that if we express an arbitrary state-input pair $(x,u)$ as a linear combination of our data, as in \eqref{linear comb quadratics}, we can write
\begin{equation*}
	\ell(x,u) = \begin{bmatrix}
		x \\ u
	\end{bmatrix}^{\intercal} L \begin{bmatrix}
	x \\ u
\end{bmatrix} = \alpha^\intercal\underbrace{\begin{bmatrix}
\tilde{X} \\ \tilde{U}
\end{bmatrix}^{\intercal} L \begin{bmatrix}
\tilde{X} \\ \tilde{U}
\end{bmatrix}}_{L_{X,U}}\alpha.
\end{equation*}
It becomes evident that $L$ and $L_{X,U}$ are congruent matrices and therefore they are two matrix representations of the same quadratic form $\ell$ expressed in two different bases \cite{BilinearAlgebra}. Under this light, the data matrix $\big[\tilde{X}^\intercal \;\, \tilde{U}^\intercal \big]^\intercal$ takes the role of the matrix transforming the basis of $L$ into the basis of $L_{X,U}$.

Regarding the computation of $L_{X,U}$, according to \eqref{recursive f} and for each entry $(i,j)$ of $L_{X,U}$, we have to evaluate $\beta$ at the corresponding $(x^{i},u^{i})$, $(x^{j},u^{j})$ picked from our dataset. We already have the $n+m$ diagonal terms of $L_{X,U}$ as they are direct stage cost evaluations $\ell(x^i,u^i)$. As for the off-diagonal terms, by recalling once again Equation \eqref{bilinear properties 2}, we still have to add $\binom{n+m}{2}$ observations in our dataset, one for each pairwise combination of $z^i$ and $z^j$. The total amount of observations needed to compute $L_{X,U}$ is $n+m+\tbinom{n+m}{2} = \tfrac{(n+m)(n+m+1)}{2}$ that, expectedly, equals the amount of unknown entries in $L$.

\subsection{State-space exploration}

A dataset $(X,U,X^{+})$ satisfying Assumption \ref{assumption rank} can be generated by initializing the dynamics at desired states and applying the suitable inputs to ensure the rank condition is satisfied. In case targeted initialization is not possible, one can build independent samples by initializing the dynamics at an arbitrary state and running a long and rich enough exploration sequence, often guaranteed by the \textit{persistence of excitation} condition on the input \cite{WillemsPersistence,TesiDatadrivencontrol}. In detail, a sequence $u^1, \dots, u^T\in\mathbb{R}^{m}$ is said to be persistently exciting of order $L$ if the associated Hankel matrix of depth $L$,

\small
\begin{equation}
\mathcal{H}_{L} = \begin{bmatrix}
		u^1 & u^2 & \cdots & u^{T-L+1} \\
		u^2 & u^3 & \cdots & u^{T-L+2} \\
		\vdots & \vdots & & \vdots \\
		u^L & u^{L+1} & \cdots & u^T 
	\end{bmatrix} \in \mathbb{R}^{(mL) \times (T-L+1)},
\end{equation}
\normalsize
has full row rank $mL$. It is evident that such condition can only be satisfied if $T \ge L(m+1)-1$. Consider then to excite the system $x^+=Ax+Bu$ with a persistently exciting sequence $u^1, \dots, u^T \in \mathbb{R}^m$ of order $n+1$, implying that $T \ge n(m+1)+m$, and record the associated state transitions $x^1,\dots,x^{T}\in\mathbb{R}^n$. Then, \cite[Corollary 2.(ii)]{WillemsPersistence} ensures that
\begin{equation}\label{conditionrank}
	\mbox{rank} \begin{bmatrix}
		x^{1} & \cdots & x^{T} \\
		u^{1} & \cdots & u^{T}
	\end{bmatrix} = n+m,
\end{equation}
and Assumption \ref{assumption rank} is satisfied, as discussed in \cite{TesiDatadrivencontrol}.

Between targeted initialization and a single exploration sequence, we can mention the works in \cite{vanWaardeMultipleDatasets} and \cite{coulson2020distributionally} where condition \eqref{conditionrank} is guaranteed even when the dataset is composed by multiple (possibly short) roll-outs.

As mentioned in the introduction, in recent literature on data-driven control Willem's \textit{fundamental lemma} is often used to obtain a data-based representation of the trajectory space of a linear system and develop analysis and control techniques. Within the context of the LP approach, we show how to construct all infinite constraints compatible with system's dynamics starting from a sufficiently rich dataset, allowing one to avoid massive sampling. 

\section{FEASIBLE REGION ESTIMATION FOR STOCHASTIC SYSTEMS}\label{Sec:Stochastic}

In the context of the LP approach, a fundamental issue when dealing with stochastic systems is the estimation of the expected values in the Bellman inequalities. As discussed \textit{e.g.} in \cite{SutterCDC2017} and \cite{AndreaRelaxedOperator}, one could re-initialize the dynamics at a fixed state-input pair $(x,u)$ a sufficient number of times $N$, observe the corresponding transition $f(x,u,\xi)$ and estimate $\E_{\xi}[v(f(x,u,\xi))]$ by averaging the observations in a Monte Carlo fashion, as
\begin{equation}\label{approx monte carlo nonlinear}
	\frac{1}{N}\sum_{k=1}^{N}v(f(x,u,\xi^k)) \approx \E_{\xi}[v(f(x,u,\xi))].
\end{equation}
On the other hand, such an estimation can only be performed if one can re-initialize the dynamics at the same state $x$ and play the same input $u$ multiple times. Clearly this assumption is limiting in a stochastic framework, since it may be impossible to re-initialize the system at a desired state in general. Here we discuss the effect of removing the re-initialization assumption by averaging the observations over the next state $f(x,u,\xi)$ instead of over the value function $v(f(x,u,\xi))$ under LQ assumptions \eqref{linearmap}-\eqref{quadraticcost}.

First, we give a matrix description of the Bellman inequalities by specializing Proposition \ref{prop} to stochastic linear systems.

\begin{proposition}\label{prop stoch}
	The constraint set in \eqref{LP1} is equivalent to
	\begin{equation}\label{constraint set stoch}
		\mbox{vec}(\E_{\xi}G(x,u,\xi))^{\intercal}\mbox{vec}(P) \le \ell(x,u) \quad \forall (x,u),
	\end{equation}
where $G : \mathbb{R}^n\times \mathbb{R}^m \times \mathbb{R}^n \rightarrow \tilde{\mathbb{M}}_n$ is
\begin{equation}\label{G and H}
	G(x,u,\xi) = xx^\intercal - \gamma (Ax+Bu+\xi)(Ax+Bu+\xi)^\intercal.
\end{equation}
\end{proposition}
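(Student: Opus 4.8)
The plan is to mirror the structure of the proof of Proposition~\ref{prop}, simply carrying the disturbance $\xi$ through the computation and invoking linearity of the expectation. The starting point is the constraint in \eqref{LP1}, which under the LQ dynamics \eqref{linearmap} reads
\begin{equation*}
	x^\intercal P x \le \ell(x,u) + \gamma \E_{\xi}\big[(Ax+Bu+\xi)^\intercal P (Ax+Bu+\xi)\big] \quad \forall (x,u).
\end{equation*}
First I would move the quadratic term $x^\intercal P x$ inside the expectation (it does not depend on $\xi$) and rewrite the whole left-hand-minus-right-hand quadratic in $P$ using the trace identity $y^\intercal P y = \mbox{tr}(P\, yy^\intercal) = \mbox{vec}(yy^\intercal)^\intercal \mbox{vec}(P)$, exactly as in the deterministic case. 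This turns the constraint into $\E_{\xi}\big[\mbox{vec}(G(x,u,\xi))^\intercal \mbox{vec}(P)\big] \le \ell(x,u)$, where $G(x,u,\xi)$ is precisely \eqref{G and H}, since $G(x,u,\xi) = xx^\intercal - \gamma(Ax+Bu+\xi)(Ax+Bu+\xi)^\intercal$ collects the two rank-one matrices whose vectorizations appear.

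The key step is then to pull the expectation through the linear pairing with $\mbox{vec}(P)$. Because $\mbox{vec}(\,\cdot\,)^\intercal \mbox{vec}(P)$ is a fixed linear functional in its first argument (it is just an inner product against the constant vector $\mbox{vec}(P)$), and because $P$ is a deterministic optimization variable, linearity of expectation gives
\begin{equation*}
	\E_{\xi}\big[\mbox{vec}(G(x,u,\xi))^\intercal \mbox{vec}(P)\big] = \big(\E_{\xi}\mbox{vec}(G(x,u,\xi))\big)^\intercal \mbox{vec}(P) = \mbox{vec}(\E_{\xi}G(x,u,\xi))^\intercal \mbox{vec}(P),
\end{equation*}
where in the last equality I would note that $\mbox{vec}(\cdot)$ is itself linear, so it commutes with $\E_{\xi}$. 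This yields \eqref{constraint set stoch} directly.

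A small sanity check I would include is that $\E_{\xi}G(x,u,\xi) \in \tilde{\mathbb{M}}_n$, so that its vectorization is well defined in the sense used throughout the paper: $G(x,u,\xi)$ is symmetric for every realization of $\xi$, and the expectation of symmetric matrices is symmetric, so $\E_{\xi}G(x,u,\xi)$ is indeed a symmetric matrix. Expanding the outer product also makes the zero-mean assumption visible, since $\E_\xi\big[(Ax+Bu)\xi^\intercal\big]=0$ and $\E_\xi[\xi\xi^\intercal]=\Sigma$, so the statement connects cleanly to Assumption~\ref{assumption stabilizability}, although one does not strictly need to expand to prove the equivalence. I do not anticipate a genuine obstacle here: the content is essentially that expectation is linear and that the deterministic reformulation of Proposition~\ref{prop} is stable under inserting $\xi$ before taking expectations. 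The only point requiring mild care is keeping the pairing-with-$\mbox{vec}(P)$ and the vectorization both inside or both outside the expectation consistently, since $P$ must remain outside as the decision variable.
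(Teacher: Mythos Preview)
Your proposal is correct and follows essentially the same approach as the paper: the paper's own proof simply states that the result holds by the same reasoning as in Proposition~\ref{prop}, and your argument is precisely that reasoning carried out with $\xi$ present and linearity of expectation invoked to commute $\E_\xi$ with the fixed linear pairing $\mbox{vec}(\cdot)^\intercal\mbox{vec}(P)$. The additional remarks you include (symmetry of $\E_\xi G$, the zero-mean expansion) are consistent with the paper and appear there immediately after the proposition rather than inside the proof.
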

\begin{proof}
	The result holds by considering a similar reasoning to the one in the proof of Proposition \ref{prop}.
\end{proof}
As expected, note that in case of deterministic dynamics \eqref{constraint set stoch} reduces to \eqref{constraint set}, as $G(x,u,0) = H(x,u)$. For zero mean noise, the expectation of $G$ is given by
\begin{equation}\label{expected G zero mean}
	\E_{\xi}G(x,u,\xi) = H(x,u) - \gamma \Sigma,
\end{equation}
and the effect of the noise boils down to a constant term in the matrix of coefficients.

In the context of the data-driven LP approach, one could be tempted to directly use data observed from the evolution of the stochastic dynamics, constructing a set of noise-corrupted constraints of the form
	\begin{equation}\label{constraint set stoch wrong}
	\mbox{vec}(G(x,u,\xi))^{\intercal}\mbox{vec}(P) \le \ell(x,u).
\end{equation}
Alternatively, a Monte Carlo approach could be employed to mitigate the effect of the noise. In the linear context, the estimation with re-initialization \eqref{approx monte carlo nonlinear} corresponds to performing the following approximation
\begin{align}\label{reinitialization}
	\frac{1}{N}\sum_{k=1}^{N}G(x,u,\xi^k) \approx \E_{\xi}G(x,u,\xi).
\end{align} 
In this case, however, re-initialization can be circumvented by averaging the $x^+$ directly instead of $G$. Consider to have a dataset $(X,U,X^+)$ of sufficient length~$N$, where $\bar{x} = \tfrac{1}{N}X\bm{1}$ and $\bar{u} = \tfrac{1}{N}U\bm{1}$ are the average state and input while $\bar{x}^+=\tfrac{1}{N}X^+\bm{1}$ is the observed sample mean of the transition over the $N$ realizations included in the dataset.

\begin{proposition}\label{prop:montecarlo}
	Consider system \eqref{linearmap} and $v\in \mathbb{V}_{q}$. Then, 
	\begin{equation}\label{monte carlo on xplus}
		G(\bar{x},\bar{u},\bar{\xi}) = \bar{x}\bar{x}^\intercal - \gamma \bar{x}^+ {\bar{x}^{+\intercal}},
	\end{equation}
where $\bar{\xi} = \tfrac{1}{N}\sum_{k=1}^{N}\xi^k$ is the sample mean of the noise over the corresponding $N$ realizations.
\end{proposition}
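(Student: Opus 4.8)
The plan is to reduce the claim to a single linearity observation: that the sample mean $\bar{x}^+$ of the next-state observations equals the nominal linear map evaluated at the sample means of the state and input, plus the sample mean of the noise. Once this identity is in hand, the proposition follows by direct substitution into the definition \eqref{G and H} of $G$, since the only argument in which $G$ depends nonlinearly on its inputs is the affine combination $Ax+Bu+\xi$.

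First I would write out $\bar{x}^+$ explicitly. By construction, the $k$-th column of $X^+$ is the observed transition $x^{k+} = f(x^k,u^k,\xi^k) = Ax^k + Bu^k + \xi^k$, where I use the linear dynamics \eqref{linearmap}. Averaging over the $N$ columns gives
\begin{equation*}
	\bar{x}^+ = \frac{1}{N}\sum_{k=1}^{N}\big(Ax^k + Bu^k + \xi^k\big).
\end{equation*}
I would then push the averaging inside using linearity of matrix--vector multiplication, so that it distributes across the three terms and commutes with $A$ and $B$:
\begin{equation*}
	\bar{x}^+ = A\Big(\tfrac{1}{N}\textstyle\sum_{k=1}^{N}x^k\Big) + B\Big(\tfrac{1}{N}\textstyle\sum_{k=1}^{N}u^k\Big) + \tfrac{1}{N}\textstyle\sum_{k=1}^{N}\xi^k = A\bar{x} + B\bar{u} + \bar{\xi}.
\end{equation*}
With the identity $\bar{x}^+ = A\bar{x} + B\bar{u} + \bar{\xi}$ established, I would substitute the triple $(\bar{x},\bar{u},\bar{\xi})$ into \eqref{G and H}: the first term $\bar{x}\bar{x}^\intercal$ is already in the desired form, and the argument $A\bar{x}+B\bar{u}+\bar{\xi}$ of the quadratic second term collapses exactly to $\bar{x}^+$, yielding $G(\bar{x},\bar{u},\bar{\xi}) = \bar{x}\bar{x}^\intercal - \gamma\,\bar{x}^+\bar{x}^{+\intercal}$, which is \eqref{monte carlo on xplus}.

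I do not expect a genuine obstacle here: the entire argument is an exchange of the sample-averaging operation with the affine map, and this exchange is licensed precisely by the linearity assumption \eqref{linearmap}. The only point deserving care is conceptual rather than technical, namely that it is the affine structure of the dynamics---not any property of the noise distribution---that makes the identity hold \emph{exactly} rather than only approximately; for a general nonlinear $f$ the step $\tfrac{1}{N}\sum_k f(x^k,u^k,\xi^k) = f(\bar{x},\bar{u},\bar{\xi})$ would fail, which is exactly what confines the construction to the LQ setting and distinguishes it from the re-initialization estimate in \eqref{reinitialization}.
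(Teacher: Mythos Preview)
Your proposal is correct and follows essentially the same approach as the paper: both establish $\bar{x}^+ = A\bar{x} + B\bar{u} + \bar{\xi}$ by linearity of the dynamics and then substitute directly into the definition \eqref{G and H} of $G$. Your added remark isolating the role of affine structure (as opposed to any distributional assumption on the noise) is accurate and complements the paper's terse argument.
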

	\begin{proof}
		Thanks to the linearity of the dynamics it holds
		\begin{align*}
			\bar{x}^+ = \frac{1}{N}\sum_{k=1}^{N}(Ax^{k}+Bu^{k}+\xi^{k}) = A\bar{x} + B\bar{u} + \bar{\xi},
		\end{align*}
		hence
		\begin{align*}
			G(\bar{x},\bar{u},\bar{\xi}) & = \bar{x}\bar{x}^\intercal - \gamma (A\bar{x}+B\bar{u}+\bar{\xi})(A\bar{x}+B\bar{u}+\bar{\xi})^\intercal \\
			& = \bar{x}\bar{x}^\intercal - \gamma \bar{x}^+\bar{x}^{+\intercal},
		\end{align*}
		concluding the proof.
	\end{proof}
Note that in order to compute $G(\bar{x},\bar{u},\bar{\xi})$ as in \eqref{monte carlo on xplus} we do not need to know $\bar{\xi}$ which is, as a matter of fact, unknown and embedded in the dynamics; we only need to compute $\bar{x}$ and $\bar{x}^+$ from our dataset. We also stress that \eqref{monte carlo on xplus} holds for any dataset irrespective of Assumption \ref{assumption rank}. Therefore, we can substitute approximation \eqref{reinitialization} with the following

\begin{equation}\label{label1}
	G(x,u,\bar{\xi}) \approx G(x,u,0) = H(x,u),
\end{equation}
where the upper bar notation has been removed to stress the fact that \eqref{monte carlo on xplus} can in principle be computed for arbitrary $(x,u)$ depending on the available data.

According to equation \eqref{expected G zero mean}, we are neglecting the contribution of the covariance matrix into the constraints, ending up solving a sampled version of the LP for the deterministic system $x^+=Ax+Bu$. It is well-known \cite{DavisStochasticControl} that the difference between the optimal value function for a stochastic linear system and its corresponding deterministic one is a constant shift depending on the covariance matrix. Consequently, the two associated policies (see Eq. \eqref{greedy policy}) coincide. 

Therefore, we circumvented the re-initialization condition at the expense of approximating a value function that, asymptotically in the number of sampled constraints, is the one associated with the deterministic dynamics. In case one is interested in policy search only, this heuristic could represent a viable choice since it asymptotically preserves the optimal policy. In general, on the other hand, we want to stress that computing the associated policy with \eqref{greedy policy} requires knowledge of $f$. For this reason, in order to make this heuristic operational, one should first reformulate the LP in terms of $Q$-functions, such that the policy extraction does not depend on the matrices $A$ and $B$. 

Finally, we discuss a heuristic on constructing approximated artificial constraints. Consider to have a dataset of length $NT$ and to partition the data into $T$ subsets $(X^i,U^i,X^{i+})$ of length $N$, so that $X^{i+}=AX^i+BU^i+D^i$ and $D^i=[\xi^{i1} \; \dots \; \xi^{iN}]^\intercal$ contains the corresponding noise realizations. No rank assumption is needed on any of the $T$ datasets. Then, compute the average dataset $(\bar{X},\bar{U},\bar{X}^+)$, where $\bar{X}=[\bar{x}^1 \; \dots \; \bar{x}^N ]^\intercal$, $\bar{U}=[\bar{u}^1 \; \dots \; \bar{u}^N ]^\intercal$, $\bar{X}^+=[\bar{x}^{+1} \; \dots \; \bar{x}^{+N} ]^\intercal$, and each of their columns is $\bar{x}^i=\tfrac{1}{N}X^i\bm{1}$, $\bar{u}^i=\tfrac{1}{N}U^i\bm{1}$ and $\bar{x}^{+i}=\tfrac{1}{N}X^{+i}\bm{1}$. Since the partition into datasets is arbitrary it is reasonable to consider that Assumption \ref{assumption rank} can be satisfied for $(\bar{X},\bar{U},\bar{X}^+)$. 
Note that, for $N$ sufficiently large, we can approximate $\bar{X}^+ \approx A\bar{X}+B\bar{U}$. As a consequence, we can exploit \eqref{fundamental relation} to artificially build a desired number of (approximated) constraints associated with the deterministic system $x^+ = Ax + Bu$. Further discussion is necessary to provide probabilistic performance bounds on the error introduced and it is deferred to future studies.

\section{CONCLUSIONS AND FUTURE WORK}

On the wave of the exciting recent literature on behavioural theory, we showed how to synthesise new constraints for the LP formulation of a linear system starting from a suitable dataset. In this way, the often poor scalability properties of the LP approach are partially alleviated by generating constraints offline and without observing the dynamics evolution. Other significant insights were given about reconstructing the associated unknown stage-costs.

Many important issues are still to be explored and discussed, such as extending the approach to the $Q$-function formulation and relaxing the linearity assumptions on the dynamics to affine or polynomial.

\bibliographystyle{plain}
\bibliography{../autosam}

\end{document}